\title[The topological K-theory of crystallographic groups.]
{The topological K-theory of crystallographic groups with holonomy $\mathbb{Z}/2$}
\author{Mario Vel\'asquez}
\address{ Departamento de Matem\'aticas
\\Pontificia Universidad Javeriana\\Cra. 7 No. 43-82 - Edificio Carlos Ortíz 6to piso\\ Bogot\'a D.C, Colombia}
\email{mavelasquezm@gmail.com}
\urladdr{https://sites.google.com/site/mavelasquezm/}
         \date{\today}
\keywords{Equivariant  K-Theory,  Baum-Connes  Conjecture, Equivariant K-homology, Crystallographic groups. 
2010 Math Subject  classification: Primary: 19L64, Secondary: 19L50,  19K33,  19L47.}
\DeclareMathAlphabet\EuR{U}{eur}{m}{n}
\SetMathAlphabet\EuR{bold}{U}{eur}{b}{n}
\theoremstyle{plain}
\newtheorem{theorem}{Theorem}[section]
\newtheorem{proposition}[theorem]{Proposition}
\theoremstyle{definition}
\newtheorem{remark}[theorem]{Remark}
\global\let\c@equation=\c@theorem}
\newcommand{\comsquare}[8]                   
{\begin{CD}
#1 @>#2>> #3\\
@V{#4}VV @V{#5}VV\\
#6 @>#7>> #8
\end{CD}
}
\newcommand{\xycomsquare}[8]                   
{\xymatrix
{#1 \ar[r]^{#2} \ar[d]^{#4} &
#3 \ar[d]^{#5}  \\
#6\ar[r]^{#7} &
#8
}
}
\newcommand{\II}{{\mathbb I}}
\newcommand{\IK}{{\mathbb K}}
\newcommand{\IR}{{\mathbb R}}
\newcommand{\IS}{{\mathbb S}}
\newcommand{\IZ}{{\mathbb Z}}
\newcommand{\bfp}{{\mathbf p}}
\newcommand{\bfv}{{\mathbf v}}
\newcommand{\Bott}{\operatorname{Bott}}
\newcommand{\rk}{\operatorname{rk}}
\newcommand{\Tor}{\operatorname{Tor}}
\newcommand{\pt}{\{\bullet\}}
\newcommand{\GL}{\operatorname{GL}}
\newcommand{\higherlim}[3]{{\setbox1=\hbox{\rm lim}
        \setbox2=\hbox to \wd1{\leftarrowfill} \ht2=0pt \dp2=-1pt
        \mathop{\vtop{\baselineskip=5pt\box1\box2}}
        _{#1}}^{#2}#3}
\newcommand{\version}[1]                       
{\begin{center} last edited on #1\\
last compiled on \today \\
name of texfile: \jobname
\end{center}
}
\newcounter{commentcounter}
\begin{document}

  \begin{abstract}
In this note we present a complete computation of the topological K-theory of the reduced C*-algebra of a semidirect product of the form $\Gamma=\mathbb{Z}^n\rtimes_\rho\mathbb{Z}/2$  with no further assumptions about of the conjugacy action $\rho$. For this, we use some results for $\mathbb{Z}/2$-equivariant K-theory proved by Rosenberg and previous results of Davis and Luck when the conjugacy action $\rho$ is free outside the origin.
  \end{abstract}
  \maketitle
  \section{Introduction}


Let $\Gamma$ be a crystallographic group with holonomy $\IZ/2$, it means that $\Gamma$ is defined by an extension\begin{equation}
\label{ext}
0\to \IZ^n\to \Gamma\to \IZ/2\to 0.\end{equation}
We denote by $C_r^*(\Gamma)$ to the reduced C*-algebra of $\Gamma$. In \cite{david-luck-cr} the topological K-theory of $C_r^*(\Gamma)$ is computed, under the assumption of free conjugacy action on $\IZ^n-\{0\}$. In this note, we give a complete computation of $K_*(C_r^*(\Gamma))$ avoiding the above assumption. We use the Baum-Connes conjecture for $\Gamma$, reducing the problem, to the computation of the $\Gamma$-equivariant K-homology of the classifying space for proper actions $\underbar{E}\Gamma$. For details on the Baum-Connes conjecture consult \cite{valette} and for a proof of the Baum-Conjecture for $\Gamma$ see \cite{HK}.

The computations are obtained following a very simple idea, firstly,  $\Gamma$ can be decomposed as a pullback $\Gamma_1\times_{\IZ/2}\Gamma_2$ over $\IZ/2$, where $\Gamma_1$ has trivial conjugacy action and $\Gamma_2$ has free conjugacy action outside the origin. A model for $\underbar{E}\Gamma$ is $\IR^n$ with the natural $\Gamma$-action, as $\IZ^n$ acts freely we have a natural isomorphism
$$K_*^\Gamma(\underbar{E}\Gamma)\cong K_*^{\IZ/2}((\IS^1)^n).$$
The pullback decomposition of $\Gamma$ gives a decomposition of $(\IS^1)^n$ as a cartesian product $(\IS^1)^r\times(\IS^1)^{n-r}$, where $\IZ/2$ acts trivially on the first component and non-trivially on the second.

We start computing $\IZ/2$-equivariant K-theory of $(\IS^1)^n$ as is defined in \cite{segal}, we apply a Kunneth formula for $\IZ/2$-equivariant K-theory proved in \cite{phillips} and some results in \cite{RosKu} to the decomposition above, later we use a Universal Coefficient Theorem for equivariant K-theory proved in \cite{joachim-luck} to obtain the computation of $K_*^{\Gamma}(\underbar{E}\Gamma)$.

A similar procedure to compute the topological K-theory of crystallographic groups with others holonomy groups is not possible at the moment, because there is no generalizations of results in \cite{RosKu} for finite groups with order greater than 2, mainly because the irreducible real representations of $\IZ/n$, ($n>2$) are actually complex.
\section{Kunneth Theorem for $\IZ/2$-equivariant K-theory}
Throughout this note, K-theory or equivariant K-theory means complex topological K-theory with compact supports for locally compact
Hausdorff spaces. Bott periodicity implies that we will regard this theory as being $\IZ/2$-graded. 

Let $R=R(\IZ/2)$ be the representation ring of $\IZ/2$, which is isomorphic to $\IZ[t]/(t^2-1)$, with $t$ representing the 1-dimensional sign representation, this ring is the coefficients for $\IZ/2$-equivariant K-theory, let $I=(t-1)$ be the augmentation ideal and $J=(t+1)$, each prime ideal $\bfp$ of $R$ contains either $I$ or $J$, and these are the unique minimal prime ideals of $R$. We denote by $R_\bfp$ the localization of $R$ at the prime ideal $\bfp$.
  
The group $\IZ/2$ has exactly two irreducible real representations, the trivial and the sing, denoted by $\IR$ and by $\IR_-$ respectively. From the sign representation we can define some kind of equivariant \emph{twisted} K-theory groups $$K_{\IZ/2,-}^*(X)=K_{\IZ/2}^*(X\times \IR_-).$$ The coefficients of this theory are $K_{\IZ/2.-}^*(\pt)\cong R/J\cong I$ concentrated in even degrees.

Let us consider the inclusion $\{0\}\to \IR_-$, it induces for every $\IZ/2$-space $X$ a homomorphism of $R$-modules $$\varphi:K_{\IZ/2,-}^*(X)\to K_{\IZ/2}^*(X),$$in the other hand, consider the following composition:
$$K_{\IZ/2}^*(X)\xrightarrow{\Bott}K_{\IZ/2}^*(X\times\IR_-\times\IR_-)\to K_{\IZ/2}^*(X\times\IR_-\times\{0\}).$$It defines a homomorphism of $R$-modules
$$\psi:K_{\IZ/2}^*(X)\to K_{\IZ/2,-}^*(X).$$

\begin{proposition}\label{bbK}
	For any $\IZ/2$-space $X$ we have a natural diagram
	$$\IK_{\IZ/2}^*(X):\xymatrix{K_{\IZ/2}^*(X) \ar@/^/[r]^\psi & K_{\IZ/2,-}^*(X)\ar@/^/[l]^\varphi.}$$Where the maps $\varphi$ and $\psi$ preserves the $\IZ/2$-grading and the composite in any order is giving by multiplication by $(1-t)$. Moreover if $\bfp\subseteq R$ is a prime ideal containing $I$, then $\psi$ and $\varphi$ vanish after localizing at $\bfp$.
\end{proposition}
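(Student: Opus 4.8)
The plan is to reduce everything to the cofiber sequence of pointed $\IZ/2$-spaces
\[
(\IZ/2)_+\longrightarrow S^0\xrightarrow{\ a\ }S^{\IR_-},
\]
in which $a$ is the inclusion of the two fixed points (the Euler class of $\IR_-$), together with Bott periodicity for the complex $\IZ/2$-representation $\IR_-\oplus\IR_-\cong\IC_{(-1)}$, the one on which the generator acts by $-1$; write $\beta\in K^0_{\IZ/2}(\IC_{(-1)})$ for its Bott class and $u:=\psi(1)\in K^0_{\IZ/2,-}(\pt)$. First one checks the easy points: $\varphi$ is restriction along the proper inclusion $\{0\}\hookrightarrow\IR_-$ and so preserves degree, while $\psi$ is the Bott isomorphism (degree $\equiv 0\bmod 2$) followed by a restriction, hence also preserves the $\IZ/2$-grading; that both are $R$-module maps is built into the construction. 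Unwinding $\psi$: the Bott map $K^*_{\IZ/2}(X)\to K^*_{\IZ/2}(X\times\IC_{(-1)})$ is external multiplication by $\beta$, and restricting to the axis $X\times\IR_-\times\{0\}$ replaces $\beta$ by its restriction $u$ to that axis, so $\psi(\eta)=\eta\times u$ with $\times$ the external product.

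For the first composite, since restriction is compatible with external products, $\varphi\psi(\eta)=\varphi(\eta\times u)=\eta\cdot\varphi(u)$, and $\varphi(u)$ is the further restriction of $\beta$ to the whole zero section $\{0\}\subseteq\IC_{(-1)}$, i.e.\ the $K$-theoretic Euler class $e(\IC_{(-1)})=1-[\IC_{(-1)}]=1-t$. Thus $\varphi\psi$ is multiplication by $1-t$.

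For the second composite the same two steps give $\psi\varphi(\xi)=\varphi(\xi)\times u$, and I would identify this with $(1-t)\xi$ by running the Euler-class computation with the two copies of $\IR_-$ interchanged. The one genuine point here — and the step I expect to be the real obstacle — is a sign: transposing two copies of $S^{\IR_-}$ is an orientation-reversing self-homeomorphism of $S^{\IR_-\oplus\IR_-}=S^{\IC_{(-1)}}$, and on $K^0_{\IZ/2}(\IC_{(-1)})$ it acts by $\beta\mapsto -t\beta$ (equivalently, complex conjugation carries $\IC_{(-1)}$ to its conjugate, which is again $\IC_{(-1)}$ but twisted by the sign character, so $\overline{\beta}=-t\beta$). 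Since $(-t)(1-t)=1-t$, the sign is absorbed and $\psi\varphi$ is again multiplication by $1-t$. It is exactly this coincidence that is special to holonomy $\IZ/2$, since for $\IZ/n$ with $n>2$ the nontrivial irreducible real representations are already complex.

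For the localization statement, smash the cofiber sequence above with $X_+$ and apply $\IZ/2$-equivariant $K$-cohomology; one gets an exact sequence
\[
\cdots\to K^n_{\IZ/2,-}(X)\xrightarrow{\ \varphi\ }K^n_{\IZ/2}(X)\xrightarrow{\ \res\ }K^n(X)\to K^{n+1}_{\IZ/2,-}(X)\to\cdots,
\]
with $\res$ the forgetful map to non-equivariant $K$-theory, so $\im(\varphi)=\ker(\res)$. As the composite $\tr\circ\res$ equals multiplication by the regular representation $1+t$ (the formula $\tr^{\IZ/2}_{\{e\}}\circ\res^{\IZ/2}_{\{e\}}=\cdot[\IZ/2]$), the submodule $\ker(\res)$ is annihilated by $1+t$; and $u$, which generates $K^0_{\IZ/2,-}(\pt)\cong R/J$, is annihilated by $1+t$ as well. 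Now if $\bfp$ contains $I=(t-1)$ then $t+1\notin\bfp$ — the unique prime containing both $I$ and $J$ is $(2,t+1)$, which one excludes (equivalently, one asks $\bfp\not\supseteq J$) — so $t+1$ is invertible in $R_\bfp$; therefore $\im(\varphi)_\bfp=0$ and $u_\bfp=0$, whence both $\varphi$ and $\psi=(-)\times u$ vanish after localizing at $\bfp$.
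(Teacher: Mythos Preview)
The paper does not prove this proposition at all: its entire proof is the citation \cite{RosKu}. So there is no argument in the paper to compare against; you have supplied what the paper merely imports.

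Your approach---the cofiber sequence $(\IZ/2)_+\to S^0\to S^{\IR_-}$, identification of $\varphi\circ\psi$ with multiplication by the Euler class $e(\IC_{(-1)})=1-t$, and the swap computation $\sigma^*\beta=-t\beta$ to handle $\psi\circ\varphi$---is exactly the standard one and is correct. One small point worth making explicit: the reason the swap computation finishes the argument for $\psi\varphi$ is that Bott periodicity is an \emph{isomorphism}, so $\sigma^*$ on all of $K^*_{\IZ/2}(X\times\IR_-\times\IR_-)$ is multiplication by $-t$, not just on classes of the form $\eta\times\beta$; with that said, your identity $j^*=i^*\circ\sigma^*$ gives $\psi\varphi(\xi)=-t\cdot(1-t)\xi=(1-t)\xi$ cleanly.

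Your caveat about the prime $(2,t-1)=(2,t+1)$ is not a defect of your argument but a genuine correction to the statement as the paper records it. For $X=\pt$ the map $\varphi$ is the inclusion $I\hookrightarrow R$, and since $\operatorname{Ann}_R(1-t)=J\subseteq(2,t-1)$, the element $1-t$ does not die in $R_{(2,t-1)}$; hence $\varphi$ does not vanish after localizing at that prime. The vanishing claim holds precisely for primes $\bfp\supseteq I$ with $1+t\notin\bfp$, which is the hypothesis you isolate. (In Rosenberg's original the relevant statements carry this restriction; the paper's transcription omits it.)
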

\begin{proof}\cite{RosKu}.
	\end{proof}

We have the following version of the Kunneth formula for $\IZ/2$-equivariant K-theory, for a proof see \cite{phillips}
\begin{theorem}\label{Kunneth}
	Let $X$ and $Y$ be $\IZ/2$-spaces, let $\bfp\subseteq R$ be a prime ideal containing $J$, with $\bfp\neq (J,2)$ there is short exact sequence of $\IZ/2$-graded, $R_\bfp$-modules  $$0\to K_G^n(X)_\bfp\otimes_{R_\bfp}K_G^m(Y)_\bfp\xrightarrow{\omega_\bfp}K_G^{m+n}(X\times Y)_\bfp\to\Tor^{R_\bfp}_1(K^n_G(X)_\bfp,K^{n+1}_G(Y)_\bfp)\to 0.$$
\end{theorem}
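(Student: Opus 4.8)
The plan is to reduce the equivariant statement, by a localization theorem of Segal type, to Atiyah's classical \emph{non-equivariant} K\"unneth theorem applied to the fixed-point spaces $X^{\IZ/2}$ and $Y^{\IZ/2}$; the hypothesis $\bfp\supseteq J$, $\bfp\neq(J,2)$ is exactly what makes that reduction work and what makes $R_\bfp$ a principal ideal domain. Write $G=\IZ/2$. First dispose of the algebra of the base ring: in $R\cong\IZ[t]/(t^2-1)$ one has $IJ=0$, and the only prime containing both $I$ and $J$ is the maximal ideal $(J,2)=(I,2)$, so the hypotheses force $2\notin\bfp$ and $I\not\subseteq\bfp$; hence $I_\bfp=R_\bfp$ and therefore $J_\bfp=I_\bfp J_\bfp=0$. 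Thus $R_\bfp$ is the localization of $R/J\cong\IZ$ at the odd rational prime under $\bfp$ (or $\IQ$ when $\bfp=(J)$); in particular it is a PID of global dimension at most $1$, which is the structural reason that only $\Tor_1$ can appear, and $\IZ\to R_\bfp$ is flat.

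The geometric core is the claim that, for $\bfp$ as above, restriction to the fixed set is a natural isomorphism of $R_\bfp$-modules, compatible with external products,
$$K_G^*(X)_\bfp\ \xrightarrow{\ \cong\ }\ K_G^*(X^{\IZ/2})_\bfp\ \cong\ K^*(X^{\IZ/2})\otimes_\IZ R_\bfp,$$
and the same for $Y$ and, via $(X\times Y)^{\IZ/2}=X^{\IZ/2}\times Y^{\IZ/2}$, for $X\times Y$. To see this, decompose the locally compact $G$-space $X$ into the closed subspace $X^{\IZ/2}$ and the open complement $U=X\setminus X^{\IZ/2}$, on which $G$ acts freely, and use the six-term exact sequence in K-theory with compact supports. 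On a free $G$-space equivariant K-theory is the compactly supported K-theory of the quotient, and as an $R$-module it is supported on $V(I)$: one checks this on a single free cell $G\times D^k$, where $K_G^*(G\times D^k)\cong K^*(D^k)$ with $t$ acting as the identity, extends it to finite free $G$-CW complexes since annihilation by a power of $I$ is closed under extensions, and passes to general $X$ by a colimit argument over compact $G$-subsets. Hence $K_G^*(U)_\bfp=0$, so $K_G^*(X)_\bfp\cong K_G^*(X^{\IZ/2})_\bfp$; and since $G$ acts trivially on $X^{\IZ/2}$ we have $K_G^*(X^{\IZ/2})\cong K^*(X^{\IZ/2})\otimes_\IZ R$, which gives the displayed form after localizing. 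This is consistent with Proposition \ref{bbK}: for such $\bfp$ the element $1-t\equiv2\pmod J$ is a unit in $R_\bfp$, so $\psi$ and $\varphi$ become mutually inverse there. Naturality and multiplicativity are clear, as restriction to a fixed subspace and the canonical splitting for trivial actions both respect external products.

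With these identifications in hand, $\omega_\bfp$ becomes $-\otimes_\IZ R_\bfp$ applied to the ordinary external product $K^*(X^{\IZ/2})\otimes_\IZ K^*(Y^{\IZ/2})\to K^*(X^{\IZ/2}\times Y^{\IZ/2})$, which by Atiyah's K\"unneth theorem sits in a natural short exact sequence with cokernel $\Tor_1^\IZ(K^*(X^{\IZ/2}),K^*(Y^{\IZ/2}))$. Base-changing along the flat map $\IZ\to R_\bfp$ preserves exactness, and the standard identities $(A\otimes_\IZ B)\otimes_\IZ R_\bfp\cong(A\otimes_\IZ R_\bfp)\otimes_{R_\bfp}(B\otimes_\IZ R_\bfp)$ and $\Tor_1^\IZ(A,B)\otimes_\IZ R_\bfp\cong\Tor_1^{R_\bfp}(A\otimes_\IZ R_\bfp,B\otimes_\IZ R_\bfp)$ (the latter by $\IZ$-flatness of $R_\bfp$) rewrite the outer terms in the required equivariant form. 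Unwinding the $\IZ/2$-grading coming from Bott periodicity then yields exactly the asserted sequence.

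I expect the main obstacle to be the geometric core in the non-compact, compactly supported setting: the localization theorem is classically stated for finite $G$-CW complexes or compact $G$-spaces, so one must make both the reduction and the statement ``$K_G^*$ of a free space is $V(I)$-supported'' (with its colimit version) precise for locally compact Hausdorff $G$-spaces, and one must see exactly why $(J,2)$ is excluded --- it is the unique prime at which the two evaluation characters $t\mapsto\pm1$ of $\IZ/2$ become congruent, so localizing there does not separate the fixed part from the free part and the reduction collapses. A secondary point is that Atiyah's K\"unneth theorem requires a finite-type hypothesis on one factor; this is harmless in the intended application, where the fixed sets are finite complexes, but in general one should instead invoke the more robust K\"unneth theorem for separable C*-algebras underlying \cite{phillips}.
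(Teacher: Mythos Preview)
The paper does not give its own proof of this theorem: it simply states the result and refers to \cite{phillips}. So there is no in-paper argument to compare against; your write-up is an independent derivation.

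Your route via Segal-type localization is correct and is the natural elementary argument for primes $\bfp\supseteq J$ with $\bfp\neq(J,2)$. The algebra is right: $IJ=0$ in $R$, the unique prime containing both $I$ and $J$ is $(I,2)=(J,2)$, so under the hypotheses $t-1$ becomes a unit, $J_\bfp=0$, and $R_\bfp$ is a localization of $\IZ$ away from $2$ (hence a PID). The geometry is also right: in Segal's localization theorem the support of such a $\bfp$ is the full group $\IZ/2$, so restriction to the fixed set induces $K_G^*(X)_\bfp\cong K_G^*(X^{\IZ/2})_\bfp\cong K^*(X^{\IZ/2})\otimes_\IZ R_\bfp$, and since $(X\times Y)^{\IZ/2}=X^{\IZ/2}\times Y^{\IZ/2}$, the equivariant K\"unneth sequence collapses to Atiyah's non-equivariant one, base-changed along the flat map $\IZ\to R_\bfp$. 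The $\Tor$ identification you use is the standard flat-base-change formula.

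What your approach buys over citing \cite{phillips} is transparency: it shows that at these primes the equivariant theory \emph{is} the ordinary K-theory of the fixed set, so nothing beyond the classical K\"unneth theorem is needed. The price is the finiteness hypotheses you already flag (Segal localization and Atiyah's K\"unneth both want a finite $G$-CW or compact condition on at least one factor); these are harmless in the paper's application to tori but would need the C*-algebraic machinery of \cite{phillips} in full generality. Your diagnosis of why $(J,2)$ is excluded---it is the unique prime where the two characters $t\mapsto\pm1$ coalesce, so localization no longer separates the free part from the fixed part---is exactly the point.
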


\begin{remark}
	When $\bfp$ contains $I$ we have a Kunneth formula taking $\IK_{\IZ/2}^*(-)$ instead of $K_{\IZ/2}^*(-)$ on Thm. \ref{Kunneth}, this is a result in \cite{RosKu}. On the other hand, as is observed in \cite{RosKu}, when $\bfp$ contains $I$, Thm. \ref{Kunneth} is not true for $K_{\IZ/2}^*(-)$ (as can be observed taking $X=Y=\IZ/2$ with the transitive action), then it is necessary to consider $\IK_{\IZ/2}^*(-)$.
\end{remark}
\section{Crystallographic groups}
Let $\Gamma$ be a group defined by the extension \ref{ext},
with the conjugation action of $\IZ/2$ given by a homomorphism $\rho:\IZ/2\to \GL(n,\IZ).$ From now on we will suppose that the conjugacy action is \emph{not} free outside the origin.

Let $H$ be the subgroup of $\IZ^n$ where $\IZ/2$ acts trivially, then there is $r\geq 1$ and a base $\{\bfv_1,\ldots,\bfv_n\}$ of $\IZ^n$ such that $\{\bfv_1,\cdots\bfv_r\}$ is a base of $H$, in this base, the homomorphism $\rho:\IZ/2\to GL(n,\IZ)$ is determined by a matrix (the image of the generator of $\IZ/2$) with the following form:

\[
\left(\begin{array}{@{}c|c@{}}

\II_r

& 0 \\
\hline
0 &

A_{n-r}

\end{array}\right)
.\]
Where the conjugation action on the last $n-r$ coordinates (denoted by $\rho_{n-r}:\IZ/2\to GL(n-r,\IZ)$) is free outside the origin.

Let us consider the canonical action of $\Gamma$ over $\IR^n$, as $\IZ^n$ acts freely, we have 
$$K_\Gamma^*(\IR^n)\cong K_{\IZ/2}^*((\IS^1)^n).$$

Now we will use Theorem \ref{Kunneth}, considering $X=(\IS^1)^r$ with the trivial $\IZ/2$-action and $Y=(\IS^1)^{n-r}$ with the $\IZ/2$-action determined by $\rho_{n-r}$. Note that $K_{\IZ/2}^*(X)$ can be computed easily (being $X$ a trivial $\IZ/2$-space) and $K_{\IZ/2}^*(Y)$ was computed in \cite{david-luck-cr}.

As the $\IZ/2$-action on $X$ is trivial, we have an isomorphism of $R$-modules $K_{\IZ/2}^*(X)\cong R\otimes_{\IZ}K^*(X)$. On the other hand $$K^*(X)\cong\begin{cases}\IZ^{2^{r-1}}& *=0\\\IZ^{2^{r-1}}&*=1.\end{cases}$$
 Then we have an isomorphism of $R$-modules

$$K^*_{\IZ/2}(X)\cong\begin{cases}R^{2^{r-1}}& *=0\\R^{2^{r-1}}&*=1.\end{cases}$$
We need to recall the following result from \cite{RosKu}.
\begin{proposition}\label{6term}
	Let $X$ be a locally
	compact $\IZ/2$-space. Then there is a natural 6-term exact sequence
	$$\xymatrix{K^1(X)\ar[r]&K^0_{\IZ/2,-}(X)\ar[r]^{\varphi}&K^0_{\IZ/2}(X)\ar[d]^{f}\\K^1_{\IZ/2}(X)\ar[u]^{f}&K^1_{\IZ/2,-}(X)\ar[l]_{\varphi}&K^0(X)\ar[l]}$$
	where the vertical arrows denoted by f on the left and right are the forgetful maps from
	equivariant to non-equivariant K-theory. 
\end{proposition}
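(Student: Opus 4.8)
The statement to prove is Proposition~\ref{6term}, which asserts the existence of a natural 6-term exact sequence relating $K^*(X)$, $K^*_{\IZ/2}(X)$, and $K^*_{\IZ/2,-}(X)$.

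The plan is to build the sequence from a single cofiber sequence of $\IZ/2$-spaces and the long exact sequences it produces. The key observation is that the inclusion $\{0\}\hookrightarrow\IR_-$ and the one-point compactification fit into a cofiber sequence; concretely, collapsing $S^0 = \{0,\infty\}$ inside the one-point compactification $S^{\IR_-}$ of $\IR_-$ realizes $S^{\IR_-}/S^0$ as a space whose reduced $\IZ/2$-equivariant K-theory is $K^*_{\IZ/2,-}(X)$ after smashing with $X_+$. More directly, I would use the cofiber sequence obtained from the pair $(X\times D(\IR_-), X\times S(\IR_-))$, where $D(\IR_-)$ is the unit disk and $S(\IR_-) = \{\pm 1\}\cong \IZ/2$ its boundary sphere. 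Since $D(\IR_-)$ is $\IZ/2$-equivariantly contractible, $K^*_{\IZ/2}(X\times D(\IR_-))\cong K^*_{\IZ/2}(X)$, while $S(\IR_-)$ is a free $\IZ/2$-space with $X\times S(\IR_-) \cong X\times\IZ/2$ (with twisted diagonal action), so $K^*_{\IZ/2}(X\times S(\IR_-))\cong K^*(X)$ by the induction/Green–Julg-type identification for free actions. Finally, $K^*_{\IZ/2}$ of the relative term $(X\times D(\IR_-))/(X\times S(\IR_-))$ is, up to a shift, exactly $K^*_{\IZ/2,-}(X) = K^*_{\IZ/2}(X\times\IR_-)$, since $D(\IR_-)/S(\IR_-)$ is the one-point compactification of the open disk, equivariantly homeomorphic to the one-point compactification of $\IR_-$.

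With these three identifications in hand, the long exact sequence of the pair $(X\times D(\IR_-), X\times S(\IR_-))$ in $\IZ/2$-equivariant K-theory becomes, after substituting, a long exact sequence involving $K^*(X)$, $K^*_{\IZ/2}(X)$, and $K^*_{\IZ/2,-}(X)$ in each of the degrees $0$ and $1$; since equivariant complex K-theory is $\IZ/2$-graded by Bott periodicity, this long exact sequence wraps into a 6-term exact sequence. I would then check that the map $K^*_{\IZ/2,-}(X)\to K^*_{\IZ/2}(X)$ appearing as the relative-to-total map is precisely the homomorphism $\varphi$ induced by $\{0\}\hookrightarrow\IR_-$ (this is essentially the definition of $\varphi$, traced through the identification of the relative term), and that the total-to-subspace maps $K^*_{\IZ/2}(X)\to K^*(X)$ are the forgetful maps, using that restriction along $X\times S(\IR_-)\to X\times D(\IR_-)\simeq X$ corresponds, under the free-action identification, to forgetting the $\IZ/2$-structure. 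Naturality in $X$ is automatic since every construction (products with fixed $\IZ/2$-spaces, long exact sequence of a pair) is natural.

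The main obstacle I anticipate is the bookkeeping of degree shifts and the precise identification of the connecting and forgetful maps, rather than any deep input. In particular one must be careful that the suspension coordinate $\IR_-$ is the sign representation, not the trivial one, so the relative term is $K^*_{\IZ/2}(X\times\IR_-)$ and not $K^{*+1}_{\IZ/2}(X)$; and one must verify that the identification $K^*_{\IZ/2}(X\times\IZ/2)\cong K^*(X)$ sends the restriction map to the genuine forgetful map $f$ with the correct sign conventions. I would also double-check consistency with Proposition~\ref{bbK}: composing two consecutive maps $\varphi$ and $\psi$ around the sequence should recover multiplication by $(1-t)$, which gives a useful internal check that the identifications have been made correctly. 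Alternatively, one can cite the construction directly from \cite{RosKu}, where this 6-term sequence is established; the argument above is the standard way to see it.
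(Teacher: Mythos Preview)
Your argument is correct and is the standard derivation: the long exact sequence of the $\IZ/2$-pair $(X\times D(\IR_-),\,X\times S(\IR_-))$, together with the identifications $K^*_{\IZ/2}(X\times D(\IR_-))\cong K^*_{\IZ/2}(X)$, $K^*_{\IZ/2}(X\times S(\IR_-))\cong K^*(X)$, and $K^*_{\IZ/2}(X\times D(\IR_-),X\times S(\IR_-))\cong K^*_{\IZ/2,-}(X)$, yields exactly the stated six-term sequence, and your identifications of the maps with $\varphi$ and the forgetful map $f$ are right.

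Note, however, that the paper does not give its own proof of this proposition: it is simply quoted from \cite{RosKu}. What you have written is essentially the argument one finds there, so you have supplied the proof the paper omits rather than a different one. Your closing remark that one can alternatively cite \cite{RosKu} is in fact all the paper does.
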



Now we have to determine the $R$-module structure of  $K_{\IZ/2}^*(Y)$, first note that $$K^*(Y)\cong\begin{cases}\IZ^{2^{n-r-1}}& *=0\\\IZ^{2^{n-r-1}}&*=1.\end{cases}$$Now we need to recall Thm. 7.1 in \cite{david-luck-cr}.
\begin{proposition}
	\begin{enumerate}
		\item There is a split short exact sequence of $R$-modules 
		$$0\to K^0(Y/(\IZ/2))\to K^0_{\IZ/2}(Y)\to I^{2^{n-r}}\to 0$$
		\item $K^1_{\IZ/2}(Y)=0$. 
	\end{enumerate}	Where the $R$-module structure in $K^0(Y/(\IZ/2))$ is determined by the augmentation map  $R\xrightarrow{\epsilon} \IZ$.
\end{proposition}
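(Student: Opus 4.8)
The plan is to compute $K^{*}_{\IZ/2}(Y)$ directly from the geometry of the action and then read off both statements. First I would make the action explicit: being an involution of $\IZ^{n-r}$ with no nonzero fixed vector, $\rho_{n-r}$ has no $+1$-eigenvalue over $\IQ$ and hence, being diagonalizable with all eigenvalues $-1$, equals $-\operatorname{id}$; so $\IZ/2$ acts on $Y=(\IS^1)^{n-r}=\IR^{n-r}/\IZ^{n-r}$ by $\bar x\mapsto-\bar x$, with fixed set $F:=Y^{\IZ/2}=(\tfrac12\IZ/\IZ)^{n-r}$ of exactly $2^{n-r}$ points, free action on $Y\setminus F$, and near each $p\in F$ linear action $-\operatorname{id}$ on $T_{p}Y\cong\IR^{n-r}$. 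Removing a small invariant open disk around each point of $F$ presents $Y$ as a $\IZ/2$-pushout of $Y_{0}$ and $\coprod_{F}D(\IR_-^{\,n-r})$ along $\coprod_{F}S(\IR_-^{\,n-r})$, where $Y_{0}$ is a compact \emph{free} $\IZ/2$-manifold with boundary, $\IZ/2$ acts on each sphere $S(\IR_-^{\,n-r})$ freely by the antipodal map, and on each disk linearly with a single fixed centre; passing to quotients gives the analogous pushout presenting $Y/(\IZ/2)$ as $Y_{0}/(\IZ/2)$ with $2^{n-r}$ cones on $\mathbb{RP}^{\,n-r-1}$ attached along copies of $\mathbb{RP}^{\,n-r-1}$, together with the comparison map $Y\to Y/(\IZ/2)$.

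Running both pushouts through $K$-theory yields a Mayer--Vietoris sequence in $K^{*}_{\IZ/2}(-)$, a Mayer--Vietoris sequence in ordinary $K^{*}(-)$ for the quotient, and a map between them. The terms are identified as follows: $K^{*}_{\IZ/2}(D(\IR_-^{\,n-r}))\cong R$ in even degrees, by equivariant contraction to the fixed centre; $K^{*}_{\IZ/2}(S(\IR_-^{\,n-r}))\cong K^{*}(\mathbb{RP}^{\,n-r-1})$ and $K^{*}_{\IZ/2}(Y_{0})\cong K^{*}(Y_{0}/(\IZ/2))$ because the actions there are free, with $R$ acting through $\epsilon\colon R\to\IZ$ since $\IR_-$ descends to a trivial real line bundle over the relevant base; and the restriction $R=K^{0}_{\IZ/2}(D(\IR_-^{\,n-r}))\to K^{0}(\mathbb{RP}^{\,n-r-1})$ is the standard map $t\mapsto[\gamma_{\IC}]$, so it coincides with $\epsilon$ after killing the torsion subgroup $\widetilde K^{0}(\mathbb{RP}^{\,n-r-1})$. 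Equivalently one may replace each disk--sphere pair by the cofibre sequence $S(\IR_-^{\,n-r})_{+}\to D(\IR_-^{\,n-r})_{+}\to S^{\IR_-^{\,n-r}}$ and feed in $\widetilde K^{*}_{\IZ/2}(S^{\IR_-^{\,n-r}})$, which by equivariant Bott periodicity is $R$ in even degrees when $n-r$ is even (then $\IR_-^{\,n-r}$ carries a complex structure) and is $\cong K^{*}_{\IZ/2,-}(\pt)\cong I$ in even degrees when $n-r$ is odd; it is here that Proposition~\ref{6term} and the coefficient identity $K^{*}_{\IZ/2,-}(\pt)\cong I$ separate the part of $K^{0}_{\IZ/2}(Y)$ that comes from the quotient (and carries the $\epsilon$-module structure) from the ``twisted'' part contributed by the $2^{n-r}$ fixed points.

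With these inputs, (ii) should come out of a diagram chase: the connecting maps land only in torsion, $K^{1}(\mathbb{RP}^{\,n-r-1})$ is either $0$ or $\IZ$, and the Mayer--Vietoris sequence then forces $K^{1}_{\IZ/2}(Y)=0$ (consistently with the fact that $-\operatorname{id}$ acts by $-1$ on $K^{1}(Y)\cong H^{\mathrm{odd}}((\IS^1)^{n-r})$, so the rational part is already zero). For (i), comparing the two Mayer--Vietoris sequences exhibits a copy of $K^{0}(Y/(\IZ/2))$ inside $K^{0}_{\IZ/2}(Y)$ — one expects this to be the submodule $\operatorname{Hom}_{R}(R/I,-)$ of $t$-invariant classes, on which $R$ acts through $\epsilon$ by construction — with quotient assembled entirely out of the $2^{n-r}$ fixed-point contributions, each a copy of $I=R/J$, hence $\cong I^{2^{n-r}}$.

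The main obstacle is precisely this last assembly: pinning down the connecting homomorphisms and the resulting extension of $R$-modules well enough to see that $K^{1}_{\IZ/2}(Y)$ vanishes on the nose (not merely modulo torsion) and that the sequence in (i) splits $R$-linearly; the splitting one would try to build by lifting a generator of each factor $I$ to the class near the corresponding fixed point detected by $K^{0}_{\IZ/2,-}(\pt)$, using the precise $R$-module structure of $K^{*}(\mathbb{RP}^{\,n-r-1})$ and the behaviour of the Euler class of $\IR_-$. I would first check the construction in the smallest case $n-r=1$, where $Y\cong S^{\IR_-}$, $K^{1}_{\IZ/2}(Y)=0$, and $K^{0}_{\IZ/2}(Y)\cong R\oplus I$ fits in $0\to\IZ\to R\oplus I\to I^{2}\to 0$; this is the most delicate point and the natural place to verify the $R$-linear splitting asserted in (i).
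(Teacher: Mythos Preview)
Your approach is genuinely different from the paper's. The paper does \emph{not} carry out a direct geometric computation: it simply invokes Theorem~7.1 of Davis--L\"uck \cite{david-luck-cr} for the existence of the short exact sequence (as abelian groups) and for $K^{1}_{\IZ/2}(Y)=0$, and then adds two observations to upgrade this to a split sequence of $R$-modules. First, the two maps in the sequence are the pullback along $Y\to Y/(\IZ/2)$ and the restriction to the $2^{n-r}$ fixed points, both visibly $R$-linear. Second --- and this is the point you are missing --- the splitting is obtained as a \emph{retraction} onto the left term, not as a section from the right: the forgetful map $K^{0}_{\IZ/2}(Y)\to K^{0}(Y)^{\IZ/2}$ lands in the $\IZ/2$-invariants, and since $K^{0}(Y)$ is torsion free one identifies $K^{0}(Y)^{\IZ/2}\cong K^{0}(Y/(\IZ/2))$; this retraction is $R$-linear because $t$ acts trivially on both ends.

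Your Mayer--Vietoris plan is reasonable and essentially reconstructs the Davis--L\"uck argument from scratch (your preliminary observation that $\rho_{n-r}=-\id$ is correct, since an integral involution with no nonzero fixed vector is diagonalizable over $\IQ$ with only $-1$ as eigenvalue). But the place you flag as the ``main obstacle'' --- producing an $R$-linear section $I^{2^{n-r}}\to K^{0}_{\IZ/2}(Y)$ by lifting generators near each fixed point --- is genuinely delicate and unnecessary: switch to the retraction side and the splitting is immediate. Likewise, your route to $K^{1}_{\IZ/2}(Y)=0$ via chasing the Mayer--Vietoris connecting maps through the torsion of $K^{*}(\mathbb{RP}^{\,n-r-1})$ can be made to work, but the paper avoids this entirely by citing the Davis--L\"uck computation. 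In short: your proposal is a valid but laborious rederivation; the paper's proof is a two-line reduction to \cite{david-luck-cr} plus the forgetful-map splitting.
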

\begin{proof}It is proved in Thm. 7.1 in \cite{david-luck-cr}, we only have to remark that the map $$K^0(Y/(\IZ/2))\to K_{\IZ/2}^0(Y)$$is the pullback of the quotient $Y\to Y/(\IZ/2)$ and is a homomorphism of $R$-modules, in a similar way the map $K^0_{\IZ/2}(Y)\to I^{2^{n-r}}$ is induced by the inclusion of representatives of the conjugacy classes of finite non-trivial subgroups of $\Gamma$, then it is a homomorphism of $R$-modules, then we have a short exact sequence of $R$-modules.
	
A splitting can be defined identifying the $R$-module $K^0(Y/(\IZ/2))$ with $K^0(Y)^{\IZ/2}$ (it happens because both groups are torsion free as abelian groups), by the forgetful map
$$K_{\IZ/2}^0(Y)\to K^0(Y)^{\IZ/2}.$$
	\end{proof}


 Identifying the $R$-modules $\IZ$ with $J$ (it can be done because $t\in R$ acts trivially) and applying the above proposition, we obtain an isomorphism of $R$-modules $$K^0_{\IZ/2}(Y)\cong J^{2^{n-r-1}}\oplus I^{2^{n-r}}.$$ 
Now we will use Thm. \ref{Kunneth} to compute $K_{\IZ/2}^*(X\times Y)_\bfp$ for every prime ideal $\bfp\subseteq R$ with $\bfp\supseteq J$ and $\bfp\neq (I,2)$, in such case we have a short exact sequence $$0\to K_{\IZ/2}^*(X)_\bfp\otimes_{R_\bfp} K_{\IZ/2}^*(Y)_\bfp\to K_{\IZ/2}^*(X\times Y)_{\bfp}\to \Tor^1_{R_\bfp}(K^*_{\IZ/2}(X)_{\bfp},K_{\IZ/2}^{*+1}(Y)_{\bfp})\to0.$$In this specific case as $K^*_{\IZ/2}(X)_{\bfp}$ is a free $R_\bfp$-module, we obtain that $$\Tor^1_{R_\bfp}(K^*_{\IZ/2}(X)_{\bfp},K_{\IZ/2}^{*+1}(Y)_{\bfp})=0,$$ then we have $$K_{\IZ/2}^*(X\times Y)_\bfp\cong K_{\IZ/2}^*(X)_\bfp\otimes_{R_\bfp} K_{\IZ/2}^*(Y)_\bfp\cong\begin{cases}
 (I_\bfp)^{2^{n-1}}&*=0\\(I_\bfp)^{2^{n-1}}&*=1.\end{cases}$$In particular $K_{\IZ/2}^*(X\times Y)_\bfp$ is torsion free as abelian group.
Now suppose $\bfp\supseteq I$, combining Prop. \ref{6term} and Prop. \ref{bbK} applied to $X\times Y$ we obtain short exact sequences 

$$0\to K^1_{\IZ/2}(X\times Y)_\bfp\xrightarrow{f} K^1(X\times Y)_\bfp\to K^0_{\IZ/2,-}(X\times Y)_\bfp\to 0$$ $$0\to K^0_{\IZ/2}(X\times Y)_\bfp\xrightarrow{f} K^0(X\times Y)_\bfp\to K^1_{\IZ/2,-}(X\times Y)_\bfp\to 0,$$where we are considering $K^*(X\times Y)$ as a $R$-module via the augmentation map $\epsilon: R\to \IZ$, in particular, $K^*_{\IZ/2}(X\times Y)_\bfp$ can be considered as a submodule of $K^*(X\times Y)_\bfp$. 

On the other hand, as $X\times Y$ is the $n$-torus we have an isomorphism of $R_\bfp$-modules
$$K^*(X\times Y)_\bfp\cong\begin{cases}J_\bfp^{2^{n-1}}&*=0\\J_\bfp^{2^{n-1}}&*=1.\end{cases}$$Then $K^*_{\IZ/2}(X\times Y)_\bfp\subseteq K^*(X\times Y)_\bfp$ is torsion free as abelian group.

As our final goal is to obtain the structure as abelian group we only need to prove that the above information implies that $K_{\IZ/2}^*(X\times Y)$ is torsion free (as abelian group), it can be done in the following way.

Let $M$ be a $R$-module, consider the \emph{$\IZ$-torsion module of $M$} defined as $$\IZ T(M)=\{x\in M\mid \text{there is } n\in \IZ-\{0\}, n\cdot x=0\}.$$Note that $\IZ T(M)$ is a $R$-submodule of $M$, and moreover $\IZ T(M)_\bfp$ can be considered as a submodule of $\IZ T(M_\bfp),$ but the above computations imply that $$\IZ T(K_{\IZ/2}^*(X\times Y)_\bfp)=0$$ for every prime ideal $\bfp\subseteq R$, then we have $$\IZ T(K_{\IZ/2}^*(X\times Y))=0,$$ and then $K_{\IZ/2}^*(X\times Y)$ is a torsion free $\IZ/2$-graded abelian group.

We only need to compute the rank of $K_{\IZ/2}^*(X\times Y)$, it can be done by the well know formula proved for example \cite{atiyah-segal-chern} or \cite{luck-chern-coh}
\begin{align*}	
\rk(K_{\IZ/2}^*(X\times Y))&=\sum_{g\in \IZ/2}\rk(K^*(X^g\times Y^g)^{C_G(g)})\\&=\rk(K^*(X\times Y)^{\IZ/2})+\rk(K^*(X^{\IZ/2}\times Y^{\IZ/2}))\\&=\rk(K^*(X))\rk(K_{\IZ/2}^*(Y))\\&=\begin{cases}3\cdot2^{n-2}&*=0\\ 3\cdot2^{n-2}&*=1.\end{cases}\end{align*}
Then we obtain.
\begin{theorem}
Let $\Gamma$ a group defined by an extension \ref*{ext}, where the action of $\IZ/2$ is \emph{not} free outside the origin, then we have an isomorphism of abelian groups

$$K_{\Gamma}^*(\underbar{E}\Gamma)\cong\begin{cases}\IZ^{3\cdot2^{n-2}}&*=0\\\IZ^{3\cdot2^{n-2}}&*=1.\end{cases}$$

\end{theorem}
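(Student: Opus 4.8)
The plan is to assemble the final result from the local-to-global torsion-freeness argument that has already been carried out, together with the rank computation, and then apply the Baum--Connes machinery to translate a statement about $K_{\IZ/2}^*((\IS^1)^n)$ into one about $K_\Gamma^*(\eub{\Gamma})$. Concretely, I would proceed as follows. First I would recall that a model for $\eub\Gamma$ is $\IR^n$ with the canonical $\Gamma$-action, and since $\IZ^n$ acts freely on $\IR^n$ we get a natural isomorphism $K_\Gamma^*(\eub\Gamma)\cong K_\Gamma^*(\IR^n)\cong K_{\IZ/2}^*((\IS^1)^n)$; under the base $\{\bfv_1,\ldots,\bfv_n\}$ adapted to $H$ this identifies $(\IS^1)^n$ with $X\times Y$, where $X=(\IS^1)^r$ carries the trivial $\IZ/2$-action and $Y=(\IS^1)^{n-r}$ carries the action determined by $\rho_{n-r}$, which is free outside the origin. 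So it suffices to compute $K_{\IZ/2}^*(X\times Y)$ as a graded abelian group.

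Next I would invoke the torsion-freeness argument established just above the statement: for every prime $\bfp\supseteq J$ with $\bfp\neq(I,2)$, Theorem~\ref{Kunneth} applies and, because $K_{\IZ/2}^*(X)_\bfp$ is free over $R_\bfp$, the $\Tor$ term vanishes and $K_{\IZ/2}^*(X\times Y)_\bfp\cong (I_\bfp)^{2^{n-1}}$ in each degree, hence $\IZ$-torsion-free; for every prime $\bfp\supseteq I$, Propositions~\ref{bbK} and~\ref{6term} applied to $X\times Y$ exhibit $K_{\IZ/2}^*(X\times Y)_\bfp$ as a submodule of $K^*(X\times Y)_\bfp$, which is $\IZ$-torsion-free since $X\times Y$ is a torus. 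Since every prime ideal of $R$ contains $I$ or $J$, and the one remaining prime is handled (here I would need to double-check that $(I,2)$, equivalently $(J,2)$, is covered — it contains $I$ so it falls under the second case), the $\IZ$-torsion submodule $\IZ T(K_{\IZ/2}^*(X\times Y))$ localizes to zero at every prime, hence is zero; thus $K_{\IZ/2}^*(X\times Y)$ is a $\IZ/2$-graded free abelian group.

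Then I would pin down the rank using the Atiyah--Segal / L\"uck Chern character formula $\rk K_{\IZ/2}^*(Z)=\sum_{g\in\IZ/2}\rk K^*(Z^g)^{C_{\IZ/2}(g)}$ applied to $Z=X\times Y$. For $g=1$ this gives $\rk K^*(X\times Y)^{\IZ/2}$, and for $g$ the generator it gives $\rk K^*((X\times Y)^{\IZ/2})=\rk K^*(X^{\IZ/2}\times Y^{\IZ/2})=\rk K^*(X)$, since $Y^{\IZ/2}$ is finite (indeed a single point up to the relevant count) because $\rho_{n-r}$ is free outside the origin. Combining with the earlier computation of $K_{\IZ/2}^*(Y)$ from \cite{david-luck-cr}, the sum collapses to $\rk K^*(X)\cdot\rk K_{\IZ/2}^*(Y)=2^{r-1}\cdot 3\cdot 2^{n-r-1}=3\cdot 2^{n-2}$ in each degree. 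Since the group is free abelian of this rank in degrees $0$ and $1$, we conclude $K_{\IZ/2}^*(X\times Y)\cong\IZ^{3\cdot 2^{n-2}}$ in each degree, and transporting along the Baum--Connes isomorphism $K_*(C_r^*(\Gamma))\cong K_*^\Gamma(\eub\Gamma)$ (valid for $\Gamma$ by \cite{HK}) finishes the proof.

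The main obstacle I anticipate is not any single hard step but the bookkeeping around the exceptional prime and the precise module structures: one must be sure that the Künneth sequence of Theorem~\ref{Kunneth} (which excludes $\bfp=(J,2)$) together with the six-term sequence of Proposition~\ref{6term} genuinely covers \emph{every} prime ideal of $R$ when establishing $\IZ$-torsion-freeness, and that the identification $\IZ\cong J$ (resp. $\IZ\cong I$) as $R$-modules is used consistently so that the localizations $(I_\bfp)^{2^{n-1}}$ and the submodule-of-$K^*(X\times Y)_\bfp$ descriptions are really $\IZ$-torsion-free. The rank formula is then a routine plug-in, but it is worth verifying that $\rk K_{\IZ/2}^*(Y)=3\cdot 2^{n-r-1}$ follows from the split exact sequence $0\to K^0(Y/(\IZ/2))\to K^0_{\IZ/2}(Y)\to I^{2^{n-r}}\to 0$ with $K^1_{\IZ/2}(Y)=0$, since $\rk I=1$ and $\rk K^0(Y/(\IZ/2))=2^{n-r-1}$.
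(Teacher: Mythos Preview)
Your proposal is correct and follows essentially the same route as the paper: reduce to $K_{\IZ/2}^*(X\times Y)$, establish $\IZ$-torsion-freeness by localizing at all primes of $R$ (K\"unneth for $\bfp\supseteq J$, the six-term sequence for $\bfp\supseteq I$, noting $(I,2)=(J,2)$ so the excluded prime is indeed covered by the second case), and then read off the rank via the Atiyah--Segal formula. One small slip: the final Baum--Connes sentence is superfluous here, since the theorem concerns the equivariant $K$-\emph{theory} $K_\Gamma^*(\eub\Gamma)$, which you have already identified with $K_{\IZ/2}^*(X\times Y)$; the assembly map and the passage to $K_*(C_r^*(\Gamma))$ belong to the next section.
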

\section{Topological K-theory of the reduced group C*-algebra}Now we can compute $\Gamma$-equivariant K-homology groups of $\underbar{E}\Gamma$ as is defined for example in \cite{joachim-luck}.

As $K^*_{\Gamma}(\underbar{E}\Gamma)$ is torsion free, the universal coefficient theorem for equivariant K-theory (Thm. 0.3 in \cite{joachim-luck}) reduces to $$K_*^{\Gamma}(\underbar{E}\Gamma)\cong\hom_\IZ(K^*_{\Gamma}(\underbar{E}\Gamma),\IZ)\cong\begin{cases}\IZ^{3\cdot2^{n-2}}&*=0\\ \IZ^{3\cdot 2^{n-2}}&*=1.\end{cases}$$

Finally by the Baum-Connes conjecture and previous results in \cite{david-luck-cr} we obtain a complete computation of the reduced group C*-algebra of the group $\IZ^n\rtimes\IZ/2$. 
\begin{theorem}Let $\Gamma$ a group defined by an extension \ref{ext}. If the action of $\IZ/2$ is \emph{not} free outside the origin, then we have an isomorphism of abelian groups
 $$K_*(C_r^*(\IZ^n\rtimes\IZ/2))\cong\begin{cases}\IZ^{3\cdot2^{n-2}}&*=0\\ \IZ^{3\cdot2^{n-2}}&*=1,\end{cases}$$ if the action of $\IZ/2$ is free outside the origin
$$K_*(C_r^*(\IZ^n\rtimes\IZ/2))\cong\begin{cases}\IZ^{3.2^{n-1}}&*=0\\0&*=1.\end{cases}$$
\end{theorem}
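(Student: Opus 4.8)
The plan is to combine the $\IZ/2$-equivariant computation obtained in the previous sections with the Baum--Connes conjecture and the equivariant universal coefficient theorem; beyond those inputs the argument is purely formal. First I would record that the Baum--Connes conjecture holds for $\Gamma$: a crystallographic group is amenable, being virtually abelian, so the assembly map is an isomorphism $K_*^\Gamma(\underbar{E}\Gamma)\xrightarrow{\ \cong\ }K_*(C_r^*(\Gamma))$ by \cite{HK} (see also \cite{valette}). Using the model $\underbar{E}\Gamma=\IR^n$, on which $\IZ^n\leq\Gamma$ acts freely, the left-hand side is $K_*^{\IZ/2}((\IS^1)^n)$, and dually $K^*_\Gamma(\underbar{E}\Gamma)\cong K^*_{\IZ/2}((\IS^1)^n)$; so everything reduces to what has already been computed.

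For the case in which the $\IZ/2$-action is not free outside the origin, I would take as input the theorem proved just above, namely that $K^*_\Gamma(\underbar{E}\Gamma)$ is free abelian of rank $3\cdot 2^{n-2}$ in each of the two $\IZ/2$-degrees; in particular it carries no $\IZ$-torsion. Feeding this into Thm.~0.3 of \cite{joachim-luck}, whose $\Ext_\IZ^1$-correction term vanishes by freeness, that theorem collapses to $K^\Gamma_*(\underbar{E}\Gamma)\cong\hom_\IZ(K^*_\Gamma(\underbar{E}\Gamma),\IZ)$, which is again free abelian of rank $3\cdot 2^{n-2}$ in each degree (the degree shift in the universal coefficient sequence being invisible here because both ranks agree). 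Composing with the Baum--Connes isomorphism of the first paragraph produces the first displayed formula.

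The case in which the action is free outside the origin requires nothing new at this point: the identical Baum--Connes reduction together with the calculation of $K^*_{\IZ/2}((\IS^1)^n)$ carried out in \cite{david-luck-cr} --- which is precisely the ``$Y$-part'' of the computation recalled above, now with the whole torus in the role of $Y$ and $r=0$ --- yields $K_*(C_r^*(\Gamma))\cong\IZ^{3\cdot 2^{n-1}}$ in degree $0$ and $0$ in degree $1$, and we simply restate that conclusion so as to have both cases together.

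The candid answer to ``where is the difficulty'' is that it does not lie in this proof: all the real work sits in the earlier sections, in showing that $K^*_{\IZ/2}((\IS^1)^n)$ is $\IZ$-torsion free of the stated rank, since torsion-freeness is exactly what lets Thm.~0.3 of \cite{joachim-luck} degenerate to plain $\IZ$-linear dualization. Granting that input as a black box --- it rests on Theorem \ref{Kunneth}, Propositions \ref{bbK} and \ref{6term}, and Thm.~7.1 of \cite{david-luck-cr}, applied prime ideal by prime ideal --- the present theorem is a two-step assembly: invoke Baum--Connes, then apply the universal coefficient theorem.
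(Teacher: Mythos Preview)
Your proposal is correct and follows essentially the same route as the paper: invoke the Baum--Connes conjecture for $\Gamma$, apply the universal coefficient theorem of \cite{joachim-luck} (which collapses because the previous section shows $K^*_\Gamma(\underbar{E}\Gamma)$ is torsion free) to pass from equivariant K-theory to equivariant K-homology, and cite \cite{david-luck-cr} for the free-outside-the-origin case. The only differences are expository --- you record Baum--Connes first and spell out why the $\Ext$-term vanishes --- not mathematical.
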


\bibliographystyle{alpha}
\bibliography{c2}
\end{document}